\newcounter{tablegroup}
\newcounter{subtable}[tablegroup]
\newtheorem{thm}{Theorem}[section]
\newtheorem{cor}[thm]{Corollary}
\newtheorem{lem}[thm]{Lemma}
\numberwithin{equation}{section}
\begin{document}
\title[Maximal equicontinuous factor and minimal map on finitely suslinean continua]
{Maximal equicontinuous factor and minimal map on finitely suslinean continua}

\author{$\textmd{Aymen\: Daghar}^{1,2}$}

\address{1.University of Carthage, faculty
of sciences of bizerte, (UR17ES21), "Dynamical Systems and their Applications"
\\ 7021, Jarzouna, Tunisia}

\address{2.University of Carthage, higher institut of management Bizerte, Menzel Abderrahmen (7035) , Bizerte, Tunisia;}

\email{aymen.daghar@isgb.ucar.tn and aymendaghar@gmail.com}

\subjclass[2000]{37B45,37B20}
\keywords{Minimal map, finitely suslinean continuum, regionally proximal pairs }
\begin{abstract}
In this paper, we introduce the notion of negatively regionally proximal pairs of onto maps which coincides with the set of regionally proximal pair of $f^{-1}$, whenever $f$ is an homeomorphism and we prove the maximal equicontinoues factor for any onto map on a locally connected continuum is monotone. Using this, we prove that if $f$ is a minimal map on a finitely suslinean continua $X$, then $X$ must be a topological circle and $f$ some irrational rotation of circle.
\end{abstract}
\maketitle

\section{Introduction}

A \textit{continuum} is a non-empty compact metric connected space. A continuum is \textit{finitely suslinean} if any family of pairwise disjoint subcontinuum is null. Finitely suslinean continua are known to be one-dimensional Peano-continua. Note that the class of finitely suslinean is larger than those regular curves, local dendrite, graph and dendrite (see \cite{Kura}).\\
Many authors where interested in the study of minimal maps, see for instance (\cite{M1,M2,M3}). Among the question related to this subject, we can wonder whenever a given space admit a minimal homeomorphism (\cite{M5,M6,M4}) and whenever a space admits a minimal map that's not an homeomorphism, such space exists as shown in many papers, such that \cite{M3}. Among finitely suslinean continua, the circle $S_1$ is the only one known to have a minimal map, moreover this map must be an homeomorphism. It has been proven that any topological graph that admit a minimal map is the circle $S_1$ \cite{G1}, this result is also true whenever $X$ is a local dendrite \cite{LC1}. Since those continua are in particular finitely suslinean, it is only naturel to question if this result stay true in the case where $X$ is finitely suslinean continua.\\
In this paper, we deal with this question and we prove that this result hold for finitely suslinean continua. For the proof, we show that the maximal equicontinoues factor for any onto map on a locally connected continuum is monotone, this was already proven in the case of homeomorphism and more generally in the case of group action, (see \cite{MEQ}, recall that a map is said to be monotone if the preimage of any connected subset if also connected). For that end, we introduce first the notion of negatively regionally proximal pairs of onto maps which coincides with the set of regionally pair of $f^{-1}$, whenever $f$ is an homeomorphism. Then, we prove that if $f$ is a minimal map on a finitely suslinean continua, $f$ cannot be weakly mixing and recalling that in the case of minimal maps, either $f$ has a non-trivial maximal equicontinous factor or $f$ is weakly mixing.\\

\section{Preliminaries}

Let $X$ be a compact metric space with metric $d$. The closure (resp. the boundary) of a subset
$A$ of $ X$ is denoted by $\overline{A}$ (respectively $ \partial A$), for $C\subset A$, we denote by $Bd_{A}(C), Cl_{A}(C), int_{A}(C)$ respectively the boundary, the closure and the interior of $C$ in $A$ endowed with the induced topology from the one on $X$.  A subset of $X$ is said to be clopen if it is open and closed at the same time. We denote by $2^X$ (resp. $C(X)$) the set of all non-empty compact subsets (resp. compact connected) subsets of $X$ and we endow $2^{X}$(resp. $C(X)$) with the Hausdorff metric $d_H$ defined as follows: $$d_H (A,B)=\max (\sup_{a\in A} d(a,B),\sup_{b\in B} d(b,A) ) $$ where $A, B \in 2^X$ and $d(x,M)=\inf_{y \in M} d(x,y)$ for any $x\in X$ and $M\in 2^X$. We denote also by $d_{I}(A,B)=\inf\{d(x,y),x\in A,y\in B\}$. Notice that both $(C(X), d_H)$ and  $(2^X, d_H)$ are compact metric space (for more details see \cite{conti}).\\
A sequence $(A_{n})_{n\geq 0}$ of compact subset of $X$ converges to $A\in 2^{X}$, if $\displaystyle\lim_{n\to \infty}d_{H}(A_{n},A)=0$. 
Let $A\subset X,\; \delta(A):=\sup_{x,y\in A}d(x,y)$.\\
Let $f: X\to X$ be a continuous map of $X$. We denote by $2^{f}:2^{X}\to 2^{X}, A\to f(A)$ and by $C(f)=2^{f}_{\setminus C(X)}$. Note that $2^{f},C(f)$ are continuous self mapping of $(2^{X},d_{H})$ and$(C(f),d_H))$ (see \cite{conti}). During the paper $S_1$ is any topological circle (i.e a topological space homeomorphic to the set $\{(x,y)\in \mathbb{R}^2,\; x^2+y^2=1\}$.\\

\begin{itemize}
  \item A family $\{A_{i},\; i\in I\}$ of subset of a metric space $X$ is said to be \emph{a null family} if for any $\epsilon>0$ there exist a finite subset $J$ of $I$ such that for any $i\in I\setminus J$, we have $\delta(A_{i}) < \epsilon $.
  \item A \textit{regular curve} (resp. a \textit{rational curve}) is a continuum for which any point has an $\epsilon$-open neighbourhood with finite (resp. at most countable) boundary, for any $\epsilon>0$.
  \item A \emph{finitely suslinean continuum} is a continuum for which any pairwise disjoint family of subcontinua is null.
  \item  A \emph{hereditarily locally connected continuum} is a locally connected continuum such that any sub-continuum of which is locally connected.
  \item A \emph{Suslinean continuum} is a continuum for which any pairwise disjoint family of subcontinua is at most countable.

\end{itemize}

Recall that each regular curve is finitely suslinean and a finitely suslinean continuum is a hereditarily locally connected continuum. Note that a hereditarily locally connected continuum is a locally arcwise connected rational continuum and any rational continuum is in particular suslinean (for more details \cite{conti}, \cite{Kura}).

A dynamical system is a pair $(X, f)$, where $X$ is a compact metric space and $f : X\to X$ is a continuous map. Let $\mathbb{Z},\ \mathbb{Z}_{+},\; \mathbb{Z}_{-}$ and $\mathbb{N}$ be the sets of integers, non-negative integers, non-positive integers and positive integers, respectively.\\
Let $(X,f)$ be a dynamical system. The forward orbit of a given point $x\in X$ is the subset $O_{f^{+}}(x)=\{f^n(x): n\in\mathbb{Z}_{+}\}$.
The $\omega$-limit set of a given point $x\in X$ is defined as follow:
$$\omega_f(x)=\cap_{n\in\mathbb{N}}\overline{\{f^k(x): k\geq n\}}$$

$$=\{y\in X: \exists n_1<n_2<\cdots: \lim_{i \rightarrow +\infty} d(f^{n_i}(x),y)=0\}$$

A point $x$ in $X$ is called:
\begin{itemize}
\item \emph{fixed} if $f(x)=x$,
\item \emph{periodic} if $f^n (x)=x$ for some $n\in \mathbb{N}$,
\end{itemize}

We denote by $P(f)$ the sets of periodic points. A subset $M$ is called \emph{minimal} if it is not empty, closed, $f$-invariant (\textit{i.e.} $f(A)\subset A$) and there is no proper subset of $M$ having these properties. A subset $M$ of $X$ is called \emph{totally minimal} if it is $f^n$-minimal for all $n\in \mathbb{N}$.
The system $(X,f)$ is said to be transitive (resp. weakly mixing) if for any two non-empty open sets $U,V$ of $X$ there exists $n\in \mathbb{N}$ such that  $f^{n}(U)\cap V\neq\emptyset$ (resp. $f^{2}$ is transitive).\\
Let $A$ be a subset of $X$, we denote by $\Delta_{A}=\{(x,x),\; x\in A\}$. A pair $(x,y)\in X\times X$ is called \textit{proximal} if $\underset{n\to \infty}\liminf\ d(f^n(x),f^n(y))=0$ otherwise it is called \textit{distal}. If $\underset{n\to \infty}\limsup\ d(f^n(x),f^n(y))=0$, then $(x,y)$ is called \textit{asymptotic}. A pair $(x,y)$ is called a \textit{Li-Yorke pair} if it is proximal but not asymptotic. The dynamical system $(X,f)$ is called \textit{distal} (resp. \textit{almost distal}) if it has no proximal pairs (resp. no Li-Yorke pairs). A point $a$ of $X$ is said to be distal if $(x,a)$ is a distal pair for any $x\in X$.
 A pair $(x,y)\in X\times X$ is called \textit{regionally proximal} if for each $\epsilon>0$ and each pair of open sets $O_{x},O_{y}$ containing respectively $x$ and $y$, there exist $x^{\prime}\in O_{x},\; y^{\prime}\in O_{y}$ and some $n\in \mathbb{N}$ such that $d(f^{n}(x^{\prime}),f^{n}(y^{\prime}))<\epsilon$.\\
 Denote by $P(X,f)$ the set of proximal pairs, $A(X,f)$ the set of asymptotic pairs and by $RP^{+}(X,f)$ the set of regionally proximal pairs. When $f$ is an homeomorphism we denote by $RP_{\mathbb{Z}}(X,f)=RP^{+}(X,f)\cup RP^{+}(X,f^{-1})$. Clearly $A(X,f),\; P(X,f)$ and $RP^{+}(X,f)$ are $f\times f$ invariant.
 \bigskip

 Given two dynamical systems $(X,f)$ and $(Y,g)$, by \textit{factor map} we mean a continuous surjective $\pi: X\to Y$ satisfying $\pi \circ f = g \circ \pi$, in this case $(X,f)$ is called an extension of $(Y,g)$ and $(Y,g)$ is called a factor of $(X,f)$. Moreover if $\pi$ is an homeomorphism we say that $(X,f)$ and $(Y,g)$ are topologically conjugated.

Recall that a $\mathbb{Z}$-action on $X$ generated by a self homeomorphism $f$ of $X$ is said to be \textit{equicontinoues} if the family $\{f^{n},\; n\in \mathbb{Z}\}$ is equicontinoues.\\

Let $(X,f)$ be a dynamical system, where $f$ is a self homeomorphism of $X$, the maximal equicontinous factor $(Y,g)$ of $(X,f)$ is an equicontinous  factor of $(X,f)$ such that any equicontinous factor $(Z,h)$ of $(X,f)$ is a factor of $(Y,g)$.\\
For $x\in X$, let
 $RP^{-}(x)=\{y\in X,\; \exists (x_n,y_n) \in X^2,\; m_n \to  +\infty\; such\; that:\\ \; d(x_n,y_n)\to 0 \;and\; f^{m_n}(x_n)\to x,\; f^{m_n}(y_n)\to y\}.$\\
 Denote by $S_{eq}(X,f)$ the equivalence relation on $X$ such that the factor induced in the maximal equicontinoues factor.  Note that for any dynamical system $(X,f),\; S_{eq}(X,f)$ exists and $P(X,f)\subset RP^{+}(X,f)\subset S_{eq}(X,f)$.  Therefore the maximal equicontinoues factor will always be an equicontinoues homeomorphism, (see \cite{eqq}) \cite{RP}).\\
For $x\in X$, we denote by $S_{eq}(x)=\{y\in X,\; (x,y)\in S_{eq}(X,f)\}$ and $RP^{+}(x)=\{y\in X,\; (x,y)\in RP^{+}(X,f)\}$.

\section{Monotonie of the maximal equicontinoues factor}
In this section, we prove that the maximal equicontinous factor for any onto map is monotone. Recall that the maximal equicontinoues factor is always an equicontinoues homeomorphism.

 \begin{lem}\label{RP- inclus}
  Let $(X,d)$ be a compact metric space and $f:X\to X$ be an onto continuous map. Let $x \in X$, then $RP^{-}(x)\subset S_{eq}(x)$.
\end{lem}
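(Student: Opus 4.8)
The plan is to show that any $y \in RP^{-}(x)$ is related to $x$ under the equivalence relation $S_{eq}(X,f)$. Since $S_{eq}(X,f)$ is the relation defining the maximal equicontinuous factor $\pi : X \to Y$ (with factor map $g$ on $Y$, $g$ an equicontinuous homeomorphism), it suffices to show $\pi(x) = \pi(y)$. Let me unpack the definition of $RP^-(x)$: there exist sequences $(x_n, y_n) \in X^2$ and integers $m_n \to +\infty$ with $d(x_n, y_n) \to 0$, $f^{m_n}(x_n) \to x$, and $f^{m_n}(y_n) \to y$.

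The key mechanism will be to push everything through $\pi$ and exploit equicontinuity of the family $\{g^n : n \in \mathbb{Z}\}$ on $Y$. Applying $\pi$ and using $\pi \circ f = g \circ \pi$, I get $g^{m_n}(\pi(x_n)) = \pi(f^{m_n}(x_n)) \to \pi(x)$ and similarly $g^{m_n}(\pi(y_n)) \to \pi(y)$, by continuity of $\pi$. Meanwhile $d(x_n, y_n) \to 0$ forces $d_Y(\pi(x_n), \pi(y_n)) \to 0$ by uniform continuity of $\pi$. Now here is where equicontinuity does the work: the family $\{g^{n}\}_{n \in \mathbb{Z}}$ is uniformly equicontinuous, so for every $\eps > 0$ there is $\delta > 0$ such that $d_Y(a,b) < \delta$ implies $d_Y(g^m(a), g^m(b)) < \eps$ for all $m$. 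For $n$ large enough $d_Y(\pi(x_n),\pi(y_n)) < \delta$, hence $d_Y(g^{m_n}(\pi(x_n)), g^{m_n}(\pi(y_n))) < \eps$. Passing to the limit and using the two convergences above gives $d_Y(\pi(x), \pi(y)) \le \eps$; since $\eps$ was arbitrary, $\pi(x) = \pi(y)$, i.e. $y \in S_{eq}(x)$.

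The main subtlety I anticipate is handling the negative-time direction cleanly when $f$ is merely onto rather than a homeomorphism: the orbits $f^{m_n}$ only move forward, so $RP^-$ is genuinely a ``backward regionally proximal'' notion recorded through forward iterates of the pairs $(x_n, y_n)$. The reason the argument still closes is that on the factor $Y$ the map $g$ is an equicontinuous \emph{homeomorphism}, so $\{g^n : n \in \mathbb{Z}\}$ is equicontinuous in both directions and I never actually need $g^{-m_n}$ — I only need that $g^{m_n}$ cannot separate pairs that start close. Thus the estimate above is symmetric in a way that does not care about the sign of the iterate, which is precisely what lets $RP^-(x)$ land inside $S_{eq}(x)$. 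I would write out the $\eps$-$\delta$ step carefully, stating the uniform equicontinuity of $\{g^n\}$ as the one external fact being invoked, and conclude by the defining property of $S_{eq}$ as the relation collapsing to the maximal equicontinuous factor.
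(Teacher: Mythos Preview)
Your proof is correct and follows essentially the same approach as the paper's: both push the $RP^{-}$ data through the factor map $\pi$ onto the maximal equicontinuous factor and then use equicontinuity of the factor homeomorphism to force $\pi(x)=\pi(y)$. The only cosmetic difference is that the paper argues by contradiction and phrases the key step as ``$(\tilde x,\tilde y)\in RP(\tilde X,\tilde f^{-1})=\Delta_{\tilde X}$'', whereas you carry out the equivalent direct $\eps$--$\delta$ estimate using uniform equicontinuity of $\{g^n\}$.
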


 \begin{proof}
 We will assume that $S_{eq}\subsetneq X^{2}$, otherwise the inclusion is trivial. Let $x\in X$ such that $RP^{-}(x) \nsubseteq S_{eq}(x)$, then there exists $y\in X$ such that $(x,y)\in RP^{-}(X,f)$ and $(x,y)\notin S_{eq}(X,f)$. $S_{eq}(X,f)$ is a closed equivalence relation and the factor system $\tilde{f}$ is an equicontinous homeomorphism, thus the family $\{\tilde{f}^{n},\; n\in \mathbb{Z}\}$ is equicontinous. We have $(x,y)\in RP^{-}(X,f)$, then we can find a sequence $(x_n,y_n)_{n\geq 0}$ converging to $(x,y)$ and $(m_n)_{n\geq 0}$ an increasing sequence of integer $a_n\in f^{-m_n}(x_n)$ and $b_n\in f^{-m_n}(y_n)$ such that $d(a_n,b_n))_{n\geq 0}$ converges to $0$. Recall that $(x,y)\notin S_{eq}(X,f)$, thus $\tilde{x}\neq \tilde{y}$ and for $n$ large enough $\tilde{x_n}\neq \tilde{y_{n}}$. Since $f^{m_n}(a_n)=x_n$ and $f^{m_n}(b_n)=y_n$ then $\tilde{f}^{m_n}(\tilde{a_n})=\tilde{x_n}$ and $\tilde{f}^{m_n}(\tilde{b_n})=\tilde{y_n}$. We have $d(a_n,b_n))_{n\geq 0}$ converges to $0$, then so does $\tilde{d}(\tilde{a_n},\tilde{b_n})_{n\geq 0}$ then $(\tilde{x},\tilde{y})\in RP(\tilde{X},\tilde{f}^{-1})$. Recall that $(\tilde{X},\tilde{f})$ is an equicontinoues homeomorphism then $RP(\tilde{X},\tilde{f}^{-1})=RP(\tilde{X},\tilde{f})=\Delta_{\tilde{X}}$ and therefore $\tilde{x}=\tilde{y}$, a contradiction.
 \end{proof}

\begin{lem}\label{RP- conn}
  Let $X$ be a locally connected continuum and $x \in X$, then $RP^{-}(x)$ is connected.
\end{lem}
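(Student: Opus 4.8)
The plan is to exhibit $RP^{-}(x)$ as a union of subcontinua all passing through the single point $x$; since a union of connected sets with a common point is connected, this gives the result at once. Concretely, I would prove that for every $y\in RP^{-}(x)$ there is a subcontinuum $K_y$ of $X$ with $x,y\in K_y$ and $K_y\subset RP^{-}(x)$. First note that $x\in RP^{-}(x)$ (take the two approximating sequences equal and $x_n\in f^{-m_n}(x)$, which exists since $f$ is onto), so the family is nonempty and $RP^{-}(x)=\bigcup_{y\in RP^{-}(x)}K_y$.

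To build $K_y$, fix $y\in RP^{-}(x)$ and choose sequences $(x_n,y_n)$ and $m_n\to+\infty$ with $d(x_n,y_n)\to 0$, $f^{m_n}(x_n)\to x$ and $f^{m_n}(y_n)\to y$. Here local connectedness enters: a locally connected continuum is uniformly locally connected (Property S), so since $d(x_n,y_n)\to 0$ there is a subcontinuum $L_n$ with $x_n,y_n\in L_n$ and $\operatorname{diam}(L_n)\to 0$. Set $K_n=f^{m_n}(L_n)\in C(X)$, a subcontinuum containing $f^{m_n}(x_n)$ and $f^{m_n}(y_n)$. Using compactness of $(C(X),d_H)$, pass to a subsequence with $K_n\to K_y$ in the Hausdorff metric; then $K_y\in C(X)$, and because $f^{m_n}(x_n)\to x$ and $f^{m_n}(y_n)\to y$ lie in $K_n$, both $x$ and $y$ belong to $K_y$.

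It then remains to verify $K_y\subset RP^{-}(x)$. Given $z\in K_y$, choose $z_n\in K_n$ with $z_n\to z$ and write $z_n=f^{m_n}(c_n)$ with $c_n\in L_n$. Since $\operatorname{diam}(L_n)\to 0$ and $x_n\in L_n$, we get $d(x_n,c_n)\to 0$, while $f^{m_n}(x_n)\to x$ and $f^{m_n}(c_n)=z_n\to z$ with $m_n\to+\infty$; hence $(x,z)\in RP^{-}(X,f)$, i.e. $z\in RP^{-}(x)$. This establishes the containment and, combined with the previous paragraph, finishes the proof.

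The step I expect to be the crux is the use of uniform local connectedness to produce the joining continua $L_n$ with $\operatorname{diam}(L_n)\to 0$: without that diameter control the pair $(x_n,c_n)$ would fail to be proximal, and the inclusion $K_y\subset RP^{-}(x)$ would break down. The hyperspace compactness of $(C(X),d_H)$, the standard fact that a $d_H$-limit of subcontinua is a subcontinuum, and the elementary observation that a union of connected sets through a common point is connected are the remaining ingredients and are routine.
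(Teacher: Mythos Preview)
Your proof is correct, and it shares with the paper the one essential idea: use uniform local connectedness of the compact locally connected space $X$ to join each pair $(x_n,y_n)$ by a small connected set $L_n$ (the paper uses arcs $J_n$) whose diameter tends to $0$, and then study the images $f^{m_n}(L_n)$. Where the two arguments diverge is in how that construction is exploited.

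The paper argues by contradiction. Assuming $y\in RP^{-}(x)$ lies outside the component of $x$, it first produces an open set $U\supset RP^{-}(x)$ whose components through $x$ and $y$ are distinct; since each $f^{m_n}(J_n)$ meets both components, it must meet $\partial U$, and a limit point of these boundary hits gives a point of $RP^{-}(x)\cap\partial U$, a contradiction. Your argument is direct: you pass to a Hausdorff limit $K_y$ of the $f^{m_n}(L_n)$ in $C(X)$ and verify $K_y\subset RP^{-}(x)$ pointwise, then conclude by writing $RP^{-}(x)$ as a union of continua through $x$. Your route is somewhat cleaner---it avoids the auxiliary separation claim about components of $U$ and uses only compactness of $C(X)$---while the paper's boundary-hitting trick is a nice geometric picture. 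Both are short once the uniform-local-connectedness step is in hand, and neither uses anything the other does not already implicitly rely on.
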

`\begin{proof}

Let $x\in X$ and $C_x$ the connected component of $RP^{-}(x)$ and $y\in RP^{-}(x)$ such that $y\notin C_x$.

\textbf{Claim:} There exists an open set $U$ of $X$ such that $RP^{-}(x) \subset U$, in which the connected components of $U$ containing respectively $x$ and $y$ are disjoint.

Indeed, since $RP^{-}(x)$ is a compact subset we can find a sequence of open neighbourhoods $(O_n)_{n \geq 0}$ of $RP^{-}(x)$ such that
\[
RP^{-}(x) = \bigcap_{n \geq 0} O_n.
\]
If for infinitely many $n \geq 0$, $x$ and $y$ are in the same connected component of $O_n$, namely $C_n$, then there is a subcontinuum $I \subset RP^{-}(x)$ containing $x$ and $y$ (it suffices to select a suitable converging subsequence of $(C_n)_{n \geq 0}$), a contradiction.

Hence, for some $N$, the connected components of $O_n$ containing respectively $x$ and $y$ are disjoint for any $n \geq N$. Consider then $U = O_N$. This ends the proof of the claim.

As $y\in RP^{-}(x)$, let $(x_n)_{n \geq 0}$ (resp. $(y_n)_{n \geq 0}$) be a sequence of $X$ converging to $x$ (resp. $y$) and $(m_n)_{n \geq 0}$ a sequence of integers such that

$\lim_{n \to +\infty} d_{I}(f^{-m_n}(x_n), f^{-m_n}(y_n)) = 0.$
Denote by $C_{x, U}$ and $C_{y, U}$ the connected component of $U$ containing respectively $x$ and $y$. Since $U$ is an open set of $X$ which is locally connected, and by the claim above, $C_{x, U}$ (resp. $C_{y, U}$) are disjoint open neighbourhoods of $x$ (resp. $y$). Therefore, for $n$ large enough, $x_n \in C_{x, U}$ and $y_n \in C_{y, U}$.

Since
\[
\lim_{n \to +\infty} d_{I}(f^{-m_n}(x_n), f^{-m_n}(y_n)) = 0
\]
and $X$ is a locally connected continuum, we may find a sequence of arcs $(J_n)_{n \geq 0}$ joining a point $a_n\in f^{-m_n}(x_n)$ and $b_n \in f^{-m_n}(y_n)$ such that
\[
\lim_{n \to +\infty} \text{diam}(J_n) = 0.
\]
Observe that for $n$ large enough, $f^{m_n}(J_n)$ is an continuum containing $x_n$ and $y_n$, thus meeting $C_{x, U}$ and $C_{y, U}$. Therefore, $f^{-m_n}(J_n) \nsubseteq U$, otherwise $C_{x, U} = C_{y, U}$. Hence, for some $N \geq 0$ and for any $n \geq N$, $f^{m_n}(J_n) \cap \partial U = \emptyset$.

Let then $t_n \in f^{m_n}(J_n) \cap \partial U$, thus $f^{-m_n}(t_n) \cap J_n \neq \emptyset$. It turns out that
\[
\lim_{n \to +\infty} d_{I}(f^{-m_n}(x_n), f^{-m_n}(t_n)) = 0.
\]
Let $t \in X$ be some limit point of the sequence $(t_n)_{n \geq N}$, clearly $t \in \partial U$, moreover $t \in RP^{-}(x)$, therefore $t \in RP^{-}(x) \cap \partial U$, which is a contradiction, since $U$ is an open neighbourhood of $RP^{-}(x)$.
\end{proof}

\begin{thm}\label{factor monotone}
    Let $(X,f)$ be a continuous map on a locally connected continuum, then for any $x\in X$, $S_{eq}(x)$ is connected. In particular $(X,f)$ is semi-conjugated to it's maximal equicontinoues factor via a monotone map.
\end{thm}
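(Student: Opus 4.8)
The plan is to realize the factor map onto the maximal equicontinuous factor through its monotone--light factorization and to show that the light part is trivial. Write $\pi\colon X\to \tilde X$ for the canonical factor map onto the maximal equicontinuous factor $(\tilde X,\tilde f)$, so that $\tilde f$ is an equicontinuous homeomorphism and the fibres of $\pi$ are exactly the classes $S_{eq}(x)=\pi^{-1}(\pi(x))$. By the Eilenberg--Whyburn factorization I would write $\pi=l\circ m$ with $m\colon X\to Z$ monotone and onto and $l\colon Z\to \tilde X$ light, where $Z$ is the quotient of $X$ by the decomposition into connected components of the fibres of $\pi$. Since $m$ is a monotone surjection of a locally connected continuum, $Z$ is again a locally connected continuum. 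First I would check that $f$ descends to $Z$: if $m(a)=m(b)$ then $a,b$ lie in one component $K$ of some fibre $\pi^{-1}(\tilde x)$, whence $f(K)$ is connected and, as $\pi f=\tilde f\pi$, contained in the single fibre $\pi^{-1}(\tilde f(\tilde x))$; thus $f(K)$ sits in one component and $m(f(a))=m(f(b))$. This yields a continuous onto $h\colon Z\to Z$ with $m\circ f=h\circ m$ and, since $m$ is onto, $l\circ h=\tilde f\circ l$.

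Next I would identify $l$ with the maximal equicontinuous factor map of $(Z,h)$. Indeed $(\tilde X,\tilde f)$ is an equicontinuous factor of $(Z,h)$ through $l$; conversely any equicontinuous factor of $(Z,h)$ is an equicontinuous factor of $(X,f)$ and hence a factor of $(\tilde X,\tilde f)$ by maximality. The systems $\tilde X$ and $\mathrm{MEF}(Z,h)$ are therefore factors of one another, so $l$ is conjugate to the canonical map $Z\to\mathrm{MEF}(Z,h)$; in particular the fibres of $l$ are the classes $S_{eq}(Z,h)(z)$, and they are totally disconnected because $l$ is light.

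Now the two lemmas apply verbatim to the locally connected continuum $Z$ and the onto map $h$. By Lemma~\ref{RP- conn} the set $RP^{-}(Z,h)(z)$ is connected and contains $z$, while by Lemma~\ref{RP- inclus} it is contained in $S_{eq}(Z,h)(z)=l^{-1}(l(z))$. A connected subset of a totally disconnected set is a singleton, so $RP^{-}(Z,h)(z)=\{z\}$ for every $z$, that is $RP^{-}(Z,h)=\Delta_Z$.

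It remains to upgrade $RP^{-}(Z,h)=\Delta_Z$ to $S_{eq}(Z,h)=\Delta_Z$, i.e. to the injectivity of $l$, and this is the main obstacle. The point to establish is that $S_{eq}(Z,h)$ is generated, as a closed $h\times h$-invariant equivalence relation, by the negatively regionally proximal relation $RP^{-}(Z,h)$ --- the non-invertible analogue of the classical fact that $RP^{+}$ generates the equicontinuous structure relation, and precisely the feature for which $RP^{-}$ was introduced, exploiting that $\tilde f$ is an equicontinuous homeomorphism (so $RP^{+}(\tilde X,\tilde f^{-1})=\Delta_{\tilde X}$). Granting this, $RP^{-}(Z,h)=\Delta_Z$ forces $S_{eq}(Z,h)=\Delta_Z$, hence $l$ is a homeomorphism and $\pi=l\circ m$ is monotone. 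Translating back, the fibres $S_{eq}(x)=\pi^{-1}(\pi(x))$ coincide with the connected fibres of $m$, so each $S_{eq}(x)$ is connected and $(X,f)$ is semi-conjugate to its maximal equicontinuous factor via the monotone map $\pi$. The delicate points are this generation statement and the verification that the component decomposition of the fibres is upper semicontinuous enough to make both the factorization and the descent of $h$ legitimate.
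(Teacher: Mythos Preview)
Your monotone--light factorization is precisely the paper's construction in different clothing: the intermediate space $Z$ is exactly the quotient $X/\mathcal{R}$ the paper forms by collapsing each $S_{eq}(x)$ to its connected components, and your $h$ is the paper's induced map $\tilde f$. The substantive difference is where Lemmas~\ref{RP- inclus} and~\ref{RP- conn} are invoked. The paper assumes the quotient is not equicontinuous, \emph{lifts} the witnessing sequences back to $X$, obtains a pair $(t_1,t_2)\in RP^{-}(X,f)$, and then uses connectivity of $RP^{-}(t_1)$ inside $S_{eq}(t_1)$ to force $t_1,t_2$ into the same $\mathcal{R}$-class, a contradiction. You instead apply the lemmas in $Z$ to get $RP^{-}(Z,h)=\Delta_Z$ and then need the implication $RP^{-}(Z,h)=\Delta_Z\Rightarrow S_{eq}(Z,h)=\Delta_Z$.

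That implication, which you flag as ``the main obstacle'', is in fact immediate and does not require any generation statement for $S_{eq}$ from $RP^{-}$. First, $RP^{-}(Z,h)=\Delta_Z$ forces $\{h^n:n\ge 0\}$ to be equicontinuous: if not, pick $\epsilon>0$ and $a_k,b_k$ with $d(a_k,b_k)\to 0$ but $d(h^{n_k}a_k,h^{n_k}b_k)\ge\epsilon$; the $n_k$ must be unbounded (else continuity gives a contradiction), and any pair of limit points of $(h^{n_k}a_k,h^{n_k}b_k)$ is a nontrivial $RP^{-}$ pair. Second, an equicontinuous surjection of a compact metric space is an isometry for the equivalent metric $\rho(x,y)=\sup_{n\ge 0}d(h^nx,h^ny)$ (the closure of $\{h^n\}$ in $C(Z,Z)$ is a compact abelian semigroup of surjections whose unique idempotent must be the identity, so $h$ is invertible and $h^{-1}$ is also $\rho$-nonexpanding). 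Hence $(Z,h)$ is an equicontinuous homeomorphism, so it is its own maximal equicontinuous factor and $l$ is a homeomorphism. With this two-line patch your argument is complete and essentially equivalent to the paper's; the paper's route is just marginally shorter because working in $X$ lets the contradiction appear directly, without the detour through ``$RP^{-}=\Delta\Rightarrow$ equicontinuous''.
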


\begin{proof}
Since $S_{eq}(X,f)$ is an equivalence relation, we can assume that for any $x\in X$ $S_{eq}(x) \subsetneq X$, otherwise we are done since $X$ is connected.\\
  Consider the equivalence relation $\mathcal{R}$ collapsing the connected component of each $S_{eq}(x)$. Clearly $\mathcal{R}$ is closed and the quotient map $\pi$ is monotone. Let $(\tilde{X},\tilde{f})$ be the factor system.\\
  
 \textbf{Claim} $(\tilde{X},\tilde{f})$ is equicontinous.\\

 Assume this is not the case, then we can find a sequence $(\tilde{x_n})_{n\geq 0}$ converging to $\tilde{x}$ and $(m_n)_{n \geq 0}$ a sequence of integers such that $\lim_{n \to +\infty} \tilde{d}(\tilde{f^{m_n}}(\tilde{x_n}),\tilde{z_1})=0$ and $\lim_{n \to +\infty} \tilde{d}(\tilde{f^{m_n}}(\tilde{x}),\tilde{z_2}) = 0$, with $\tilde{z_1}\neq \tilde{z_2}$. Let n$\geq 0$ and $(x,z_1,z_2,x_n) \in X^4$ such that $\pi(x,z_1,z_2,x_n)=(\tilde{x},\tilde{z_1},\tilde{z_2},\tilde{x_n})$. We can assume up to taking a sub-sequence that $(\pi(x_n))_{n\geq 0}$ converges to $A\subset \pi(x)$, $\pi(f^{m_n}(x_n))_{n\geq 0}$ converges to $Z_1\subset \pi(z_1)$ and $\pi(f^{m_n}(x))_{n\geq 0}$ converges to $Z_2\subset \pi(z_2)$, with respect to the Hausdorff metric.\\
 Let now $c_n\in \pi(x_n)$, the sequence $(c_n)_{n\geq 0}$ converge to some $c\in \pi(x)$. On one hand $f^{m_n}(c)\in \pi(f^{m_n}(x))$ on the other hand $f^{m_n}(c_n)\in \pi(f^{m_n}(x_n))$. Therefore $(f^{m_n}(c_n))_{n\geq 0}$ converges to $t_1 \in \pi(Z_1)$ and $(f^{m_n}(c))_{n\geq 0}$ converges to $t_2 \in Z_2$. Therefore $(t_1,t_2)\in RP^{-}(X,f)$. By Lemma \ref{RP- conn}, we have that $RP^{-}(t_1)$ is connected and by Lemma \ref{RP- inclus}, we have that $RP^{-}(t_1)\subset S_{eq}(t_1)$. We conclude that $t_1,t_2$ are in the same connected component of $S_{eq}(t_1)$ thus $(t_1,t_2)\in \mathcal{R}$, thus $\pi(t_1)=\pi(t_2)$. Recall that $t_1\in Z_1\subset \pi(z_1)$ and  $t_2\in Z_2\subset \pi(z_2)$, thus $\pi(z_1)=\pi(z_2)$ a contradiction and the claim is proven, thus $S_{eq}(X,f) \subset \mathcal{R}$.\\

 It remains to show that for any $S_{eq}(x)$ is connected, this follows immediately from the fact that: $$\mathcal{R}\subset S_{eq}(X,f) \subset \mathcal{R}.$$ Therefore for any $x\in X,\; S_{eq}(x)=\mathcal{R}(x)$ is connected and the map $\pi$ is monotone.
 \end{proof}
 \begin{cor}
   Let $(X,f)$ be a minimal map on a locally connected continuum, then either $(X,f)$ is weakly mixing or $(X,f)$ is semi-conjugated via a monotone map to some isometry $(\tilde{X},\tilde{f})$, where $\tilde{X}$ is an homogenous locally connected continuum.
 \end{cor}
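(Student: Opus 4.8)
The plan is to run the classical weak--mixing dichotomy for minimal systems and then feed its non-trivial branch into Theorem \ref{factor monotone}. Recall that for a \emph{minimal} system the regionally proximal relation $RP^{+}(X,f)$ is a closed $f\times f$-invariant equivalence relation, that it coincides with $S_{eq}(X,f)$, and that the associated quotient is precisely the maximal equicontinuous factor $(\tilde X,\tilde f)$. Recall moreover the classical equivalence, valid for minimal systems, between weak mixing and $RP^{+}(X,f)=X\times X$, that is, between weak mixing and triviality of the maximal equicontinuous factor. Thus either $S_{eq}(X,f)=X\times X$, in which case $(X,f)$ is weakly mixing and the first alternative holds, or $S_{eq}(X,f)\subsetneq X\times X$ and $(\tilde X,\tilde f)$ is non-trivial.

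In the non-trivial case I would apply Theorem \ref{factor monotone} directly: since $X$ is a locally connected continuum, the factor map $\pi\colon X\to\tilde X$ onto the maximal equicontinuous factor is monotone, so $(X,f)$ is semi-conjugated to $(\tilde X,\tilde f)$ via a monotone map. It then remains only to identify the nature of the factor. First, $\tilde X=\pi(X)$ is a continuous image of a Peano continuum, hence by the Hahn--Mazurkiewicz theorem it is again a locally connected continuum.

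Next I would upgrade $\tilde f$ to an isometry. The factor $(\tilde X,\tilde f)$ is minimal, being a factor of a minimal system, and it is equicontinuous by construction. Equicontinuity allows one to replace the metric $\tilde d$ by the compatible metric $d^{*}(u,v)=\sup_{n\in\mathbb Z}\tilde d(\tilde f^{\,n}(u),\tilde f^{\,n}(v))$, under which $\tilde f$ is an isometry; hence $(\tilde X,\tilde f)$ is (conjugate to) an isometry. Finally, for homogeneity, I would invoke the enveloping semigroup: for an equicontinuous system the Ellis semigroup $E(\tilde X)$ is a compact topological group acting on $\tilde X$ by homeomorphisms, and minimality forces this action to be transitive, since $E(\tilde X)\cdot u=\overline{\{\tilde f^{\,n}(u):n\in\mathbb Z\}}=\tilde X$ for every $u\in\tilde X$. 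A transitive action by homeomorphisms makes $\tilde X$ homogeneous, completing the argument.

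The genuinely new ingredient is Theorem \ref{factor monotone}, which turns the abstract equicontinuous semiconjugacy into a \emph{monotone} one; everything else is classical structure theory. Accordingly, the main point to handle carefully is the bookkeeping of the dichotomy: checking that $S_{eq}(X,f)=RP^{+}(X,f)$ for minimal systems, that its triviality is equivalent to weak mixing, and that the quotient by $S_{eq}$ really is the maximal equicontinuous factor, so that Theorem \ref{factor monotone} applies verbatim. I expect the homogeneity step to be the most delicate to state cleanly, since it rests on the enveloping-semigroup description of equicontinuous minimal systems.
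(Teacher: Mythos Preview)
Your proposal is correct and follows exactly the route the paper intends: the corollary is stated without proof immediately after Theorem~\ref{factor monotone}, and the dichotomy you invoke is precisely Theorem~\ref{RPP} (cited later in the paper), while the isometry/homogeneity step is the classical structure theory of minimal equicontinuous systems you describe. The only detail you leave implicit is that a minimal map on a compact space is automatically onto, so Theorem~\ref{factor monotone} indeed applies; otherwise your bookkeeping matches the paper's implicit argument.
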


It is well known that the simple closed curve is the only homogenous suslinean continua (see \cite{hsc}), moreover suslinean continua are invariant under monotone mapping. As immediate consequence we have the following :
\begin{thm}
Let $(X,f)$ be a minimal map on a locally connected suslinean continua, then either $(X,f)$ is weakly mixing or $(X,f)$ is semi-conjugated to an irrational rotation of the circle
\end{thm}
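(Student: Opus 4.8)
The plan is to derive this theorem directly from the preceding Corollary together with the two external facts quoted just above the statement, so that the argument is essentially a clean-up of the dichotomy already established. First I would invoke the Corollary: since $X$ is in particular a locally connected continuum and $f$ is minimal, either $(X,f)$ is weakly mixing, in which case the first alternative of the conclusion holds and there is nothing further to prove, or there is a monotone factor map $\pi : X \to \tilde{X}$ onto an isometry $(\tilde{X},\tilde{f})$ with $\tilde{X}$ a homogeneous locally connected continuum. It is this second case that requires identifying $\tilde{X}$ and $\tilde{f}$ concretely.

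Second, I would transport the suslinean property through $\pi$. Since $\pi$ is a monotone surjection and $X$ is suslinean, the quoted invariance of suslinean continua under monotone maps yields that $\tilde{X}=\pi(X)$ is again a (locally connected) suslinean continuum. Combining this with the homogeneity of $\tilde{X}$ and the quoted classification, namely that the simple closed curve is the unique homogeneous suslinean continuum, forces $\tilde{X}$ to be a topological circle $S_1$.

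Third, I would pin down $\tilde{f}$. Because $(X,f)$ is minimal and $\pi$ is a factor map, the factor $(\tilde{X},\tilde{f})$ is itself minimal, and $\tilde{f}$ is by construction an isometry of $S_1$. An isometry of the circle is either a rotation or a reflection; reflections are involutions and rational rotations are periodic, so both possess periodic points and cannot act minimally on an infinite space. Hence minimality leaves only the irrational rotations, so $(\tilde{X},\tilde{f})$ is an irrational rotation of the circle. Consequently $(X,f)$ is semi-conjugated, via the monotone map $\pi$, to an irrational rotation of the circle, which is the second alternative of the conclusion.

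The only genuinely delicate point is the final one: verifying that a minimal circle isometry must be an irrational rotation rather than some other isometry. This rests on the elementary observation that every circle isometry other than an irrational rotation has a periodic point, together with the fact that a factor of a minimal system is minimal. Everything else is a direct application of the Corollary and the two cited structural facts, so I would expect the write-up to be short.
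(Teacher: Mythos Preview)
Your proposal is correct and matches the paper's approach exactly: the paper states this theorem as an ``immediate consequence'' of the preceding Corollary together with the two cited facts (homogeneous suslinean continua are circles, and the suslinean property is preserved under monotone images), which is precisely the deduction you outline. Your third step, identifying the minimal circle isometry as an irrational rotation, is the only detail the paper leaves implicit, and you handle it correctly.
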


  \section{Minimal map on finitely suslinean continua}
 In the following we will show that minimal map on finitely suslinean continua can't not be weakly mixing thus semi-conjugated via a monotone map to some isometry $(\tilde{X},\tilde{f})$, where $\tilde{X}$ is an homogenous finitely suslinean continuum, thus $\tilde{X}$ has to be a topological cirle $S_1$. We will later show that $X=S_1$.

\begin{thm}\cite{RP}\label{RPP}
 Let $f:X\to X$ be a minimal map, where $X$ is compact metric space. We have the following : \begin{itemize}
                                                                                               \item $RP^{+}(X,f)=S_{eq}(X,f).$ (\emph{ Theorem 1.4})
                                                                                               \item $RP^{+}(X,f)$ is a closed  invariant equivalence relation.
                                                                                               \item The Induced factor system in an equicontinoues homeomorphism (\emph{Theorem 1.4})
                                                                                               \item $RP^{+}(X,f)=X^2 \Leftrightarrow (X,f)$ weakly mixing. \emph{(Theorem 4.12)}
                                                                                               \end{itemize}
                                                                                               \end{thm}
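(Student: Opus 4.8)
The plan is to establish the four items in turn, organising the argument around the Ellis (enveloping) semigroup $E=E(X,f)=\overline{\{f^{n}:n\in\mathbb{Z}_{+}\}}$, where the closure is taken in $X^{X}$ with the product topology. Recall that $E$ is a compact right-topological semigroup and that, since $(X,f)$ is minimal, $E$ possesses minimal left ideals, each acting transitively on $X$ and containing idempotents; this is the machinery through which the regionally proximal relation is best controlled. I would first dispatch the cheap structural properties of $RP^{+}(X,f)$: reflexivity and symmetry are immediate from the definition, closedness follows because membership is phrased as a limit condition on nets $(x_{k},y_{k},n_{k})$, and invariance under $f\times f$ holds because $RP^{+}$ is carried into itself by every factor map, in particular by $f$ regarded as a self-factor of $(X,f)$.

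The principal obstacle is transitivity, i.e.\ showing that $RP^{+}(X,f)$ is an equivalence relation. This is precisely the step where the commutativity of the acting group $\mathbb{Z}$ is indispensable, since for general group actions $RP^{+}$ need not be transitive. Here I would appeal to the standard description of regionally proximal pairs via minimal idempotents of $E$ and exploit that, in the abelian setting, the relevant limiting elements commute, so that two regionally proximal relations $(x,y)$ and $(y,z)$ can be composed into $(x,z)$. This is the classical theorem (Ellis--Gottschalk, see also Auslander) that for abelian phase groups the regionally proximal relation is a closed invariant equivalence relation; I expect reproducing it carefully to be the most delicate part of the whole argument, and it yields the second item.

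With $RP^{+}(X,f)$ known to be a closed invariant equivalence relation, I would pass to the quotient $Y=X/RP^{+}(X,f)$ with induced map $g$ and projection $\pi:X\to Y$. Two facts then give the third item. First, collapsing $RP^{+}$ trivialises the relation, that is $RP^{+}(Y,g)=\Delta_{Y}$, which one checks directly by lifting nets witnessing a regionally proximal pair in $Y$ back through $\pi$ and using closedness of $RP^{+}(X,f)$. Second, a minimal system whose regionally proximal relation is the diagonal is equicontinuous, and a minimal equicontinuous surjection of a compact metric space is automatically a homeomorphism (these are the Kronecker, i.e.\ rotation, systems); hence $(Y,g)$ is an equicontinuous homeomorphism. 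The identification $RP^{+}(X,f)=S_{eq}(X,f)$ of the first item now follows: the inclusion $RP^{+}(X,f)\subset S_{eq}(X,f)$ is already recorded in the preliminaries, since regionally proximal pairs are collapsed by any equicontinuous factor, while the reverse inclusion holds because $S_{eq}(X,f)$ is by definition the smallest closed invariant equivalence relation with equicontinuous quotient, and $RP^{+}(X,f)$ is shown to be such a relation.

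Finally, for the fourth item I would use that $Y=X/RP^{+}(X,f)$ is the maximal equicontinuous factor, so that $RP^{+}(X,f)=X^{2}$ is equivalent to this factor reducing to a single point, that is to $(X,f)$ having only the trivial equicontinuous factor. The easy direction is that weak mixing forbids a nontrivial equicontinuous factor $(Z,h)$, since then $Z\times Z$ would be a factor of $X\times X$ yet fails to be transitive for nontrivial equicontinuous $Z$, contradicting transitivity of $f\times f$; hence weak mixing forces $RP^{+}(X,f)=X^{2}$. The converse, that triviality of the maximal equicontinuous factor (equivalently, the absence of nonconstant continuous eigenfunctions) implies weak mixing for a minimal system, is the substantive classical input (Keynes--Robertson), and I would invoke it to close the equivalence; this is the content flagged as Theorem 4.12 in the cited reference.
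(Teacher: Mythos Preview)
The paper does not prove this theorem at all: it is quoted verbatim from \cite{RP} (Dai), with the internal theorem numbers 1.4 and 4.12 indicated in the statement itself, and is used as a black box in Section~4. So there is no ``paper's own proof'' to compare your proposal against.

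Your outline is the standard route to these results and is essentially correct in spirit. Two small points are worth flagging. First, you repeatedly speak of the acting group $\mathbb{Z}$, but in this paper $f$ is only assumed to be a continuous surjection, so the acting semigroup is $\mathbb{Z}_{+}$; the cited reference \cite{RP} works in the semiflow setting precisely to cover this, and the Ellis--Gottschalk/Veech transitivity argument you invoke must be the semigroup version. Second, in the third item you need that the minimal equicontinuous quotient $(Y,g)$ is a homeomorphism even though $g$ is a priori only a surjection; this is true (a minimal equicontinuous map on a compact metric space is an isometry in an equivalent metric, hence bijective), but it is a step that deserves to be made explicit rather than folded into ``these are the Kronecker systems''. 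With those caveats your sketch is an accurate summary of how the cited results are proved.
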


 \begin{thm}\cite{PCWE}(Theorem 5.3)\label{PCW}
  If $f:X\to X$ is a positively continuum-wise expansive map of a compact metric space $X$, then every minimal set of $f$ is $0$ dimensional.
\end{thm}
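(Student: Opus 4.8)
The plan is to argue by contradiction. Let $c>0$ be the positive continuum-wise expansive constant, so that by definition every nondegenerate subcontinuum $A\subseteq X$ satisfies $\mathrm{diam}(f^{n}(A))>c$ for some $n\ge 0$. Let $M$ be a minimal set and suppose, toward a contradiction, that $\dim M\ge 1$. I would first record that $f(M)=M$: indeed $M$ is closed, nonempty and invariant, while $f(M)$ is again closed, nonempty and invariant, so minimality forces equality. The strategy is then to manufacture, out of the positive dimension of $M$, a dynamical configuration incompatible with expansiveness.

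The first step is purely topological. Since $M$ is a compactum with $\dim M\ge 1$, it is not totally disconnected, so some connected component of $M$ is nondegenerate; that is, $M$ contains a nondegenerate subcontinuum $C_0$. Working in the hyperspace $C(M)$, I would also use the elementary fact that the diameter map $\mathrm{diam}\colon C(C_0)\to[0,\mathrm{diam}(C_0)]$ is continuous and onto (because $C(C_0)$ is itself a continuum), so $M$ contains subcontinua of every prescribed small size. This lets me normalise and assume $\mathrm{diam}(C_0)$ is below $c$.

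The heart of the argument is to transport everything to the induced system $(C(M),C(f))$, where $C(f)(A)=f(A)$, and to pit the expansion furnished by cw-expansiveness against the recurrence furnished by minimality. Since $f|_{M}$ is onto, $C(f)$ is a continuous self-surjection of the compact space $C(M)$, so $C(M)$ carries $C(f)$-minimal sets. The singletons already form an invariant copy of $(M,f)$ inside $C(M)$, so the real goal is to locate a \emph{nondegenerate} recurrent continuum whose forward diameters stay controlled, and from it to produce a subcontinuum no iterate of which ever exceeds $c$ — which is exactly what positive cw-expansiveness forbids. Concretely I would analyse the orbit closure $\overline{\{f^{n}(C_0):n\ge 0\}}$ in $C(M)$, extract recurrent continua as Hausdorff limits of returning iterates, and note that a $C(f)$-minimal set meeting the nondegenerate continua must consist \emph{entirely} of nondegenerate continua (its singletons would otherwise form a proper invariant subset).

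I expect the main obstacle to be precisely this collision of expansion with recurrence, for two related reasons. First, positive cw-expansiveness guarantees only that \emph{some} forward iterate of a nondegenerate continuum exceeds $c$; it neither forbids the diameter from oscillating back down nor provides uniform, persistent stretching, so converting eventual expansion into a contradiction with uniform recurrence seems to require a quantitative, Reddy/Kato-type adapted metric in which small nondegenerate continua are stretched by a definite factor $\lambda>1$ up to a fixed scale. Second, and more seriously, $f|_{M}$ need not be injective, so a nondegenerate continuum may be folded or collapsed and its iterates may Hausdorff-converge to a singleton; this can make every relevant $C(f)$-minimal set trivial and blocks the naive hyperspace argument. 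Handling these points — presumably by combining the quantitative stretching estimate with the uniform recurrence of the base minimal system to show that a small nondegenerate continuum cannot recur without violating the estimate, say via the smallest-diameter element $A_*$ of a nondegenerate minimal set and a preimage $B_*$ with $\mathrm{diam}(f(B_*))\ge\lambda\,\mathrm{diam}(B_*)$ — is where the real work lies, after which $\dim M=0$ follows and $M$ is totally disconnected.
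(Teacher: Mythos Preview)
The paper does not prove this theorem at all: it is quoted verbatim as Theorem~5.3 of Kato~\cite{PCWE} and used as a black box in the proof of Theorem~\ref{NWM}. There is therefore no ``paper's own proof'' to compare your proposal against.

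As for the proposal itself, it is not a proof but a strategy sketch that openly defers the decisive step. You correctly identify the relevant arena (the induced map $C(f)$ on the hyperspace $C(M)$), the relevant tension (forward expansion of nondegenerate continua versus recurrence coming from minimality), and the relevant obstruction (positive cw-expansiveness gives only \emph{eventual} stretching past $c$, with no control on intermediate shrinking or folding). But you then write that obtaining a quantitative Reddy/Kato-type estimate and ruling out collapse to singletons ``is where the real work lies'', and you stop. That is precisely the content of Kato's argument, so what you have is a reasonable reading of the landscape rather than a proof. In particular, the suggested endgame --- taking the smallest-diameter element $A_*$ of a nondegenerate $C(f)$-minimal set and a preimage $B_*$ with $\mathrm{diam}(f(B_*))\ge\lambda\,\mathrm{diam}(B_*)$ --- does not go through as stated: without the adapted metric you have not established any such $\lambda>1$, and even with it the inequality concerns $f(B_*)$, not $A_*$ itself, so you have not produced an element of the minimal set with diameter strictly below $\mathrm{diam}(A_*)$.
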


\begin{thm}\cite{TP}
Let $f:X\to X$ be a minimal weakly mixing map, where $X$ is compact metric space
, then for every $x\in X$ the set $Tran(x)\subset X$ of points $y$ such that $(x,y)$ is a transitive point with respect to $f^2$, is a dense $G_{\delta}$ set.

\end{thm}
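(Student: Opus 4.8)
The plan is to realise $Tran(x)$ as the $x$-slice of the set of transitive points of the product system $F=f\times f$ on $X\times X$, and then to prove density one slice at a time by a syndetic-versus-thick argument. Since $f$ is weakly mixing, $F$ is transitive, and $X\times X$ is a compact (hence Baire) second-countable metric space (the case where $X$ is a single point being trivial). Fix a countable base of $X\times X$ of the form $(V_k\times W_k)_{k\ge 0}$, where $V_k,W_k$ range over a countable base of $X$; then $(x,y)$ has dense forward $F$-orbit exactly when for every $k$ there is $n\ge 0$ with $f^n(x)\in V_k$ and $f^n(y)\in W_k$. Consequently, for fixed $x$,
$$Tran(x)=\bigcap_{k\ge 0} U_k,\qquad U_k=\bigcup_{n\in N(x,V_k)} f^{-n}(W_k),\qquad N(x,V_k)=\{n\ge 0:\ f^n(x)\in V_k\}.$$
Each $U_k$ is a union of open sets, hence open, so $Tran(x)$ is a $G_\delta$; by the Baire category theorem it then suffices to prove that each $U_k$ is dense in $X$.

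Next I would translate density of $U_k$ into a combinatorial intersection statement. Fixing a non-empty open $O\subset X$, one has $U_k\cap O\neq\emptyset$ iff there is $n\in N(x,V_k)$ with $f^{-n}(W_k)\cap O\neq\emptyset$, i.e. with $f^n(O)\cap W_k\neq\emptyset$. Writing $N(O,W_k)=\{n\ge 0:\ f^n(O)\cap W_k\neq\emptyset\}$, density of $U_k$ is therefore equivalent to $N(x,V_k)\cap N(O,W_k)\neq\emptyset$ for every non-empty open $O$. The two return-time sets are governed by the two hypotheses. Minimality gives that $N(x,V_k)$ is \emph{syndetic} (bounded gaps): by minimality the open invariant-type union $\bigcup_{i\ge 0} f^{-i}(V_k)$ covers $X$, so by compactness $X=\bigcup_{i=0}^{L} f^{-i}(V_k)$ for some $L$, whence every window $[m,m+L]$ meets $N(x,V_k)$. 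Weak mixing will give that $N(O,W_k)$ is \emph{thick} (contains arbitrarily long blocks of consecutive integers); granting this, a thick set contains an interval longer than the syndetic gap of $N(x,V_k)$, so the two sets must meet, and each $U_k$ is dense.

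The crux, and the main obstacle, is the thickness of $N(O,W)$, which is the only place requiring the full strength of weak mixing. Here I would invoke Furstenberg's theorem that a weakly mixing system has all finite Cartesian powers $f^{(r)}=f\times\cdots\times f$ transitive. To exhibit a block $\{m,m+1,\dots,m+r\}\subset N(O,W)$, apply transitivity of $f^{(r+1)}$ on $X^{r+1}$ to the non-empty open sets $O\times\cdots\times O$ and $W\times f^{-1}(W)\times\cdots\times f^{-r}(W)$, the latter being non-empty since $f$, being minimal, is onto. A common return time $m$ yields points $u_0,\dots,u_r\in O$ with $f^{m}(u_i)\in f^{-i}(W)$, i.e. $f^{m+i}(u_i)\in W$, so that $\{m,m+1,\dots,m+r\}\subset N(O,W)$. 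Since $r$ is arbitrary, $N(O,W)$ is thick. Combining this with the syndeticity established above shows each $U_k$ is dense, and the Baire argument then finishes the proof. The syndeticity and the slicing are routine; the thickness lemma, resting on transitivity of all finite powers, carries the weight of the argument.
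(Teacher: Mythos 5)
This statement is imported by the paper from \cite{TP} and stated without proof, so there is no internal argument to compare against; judged on its own, your proof is correct and self-contained modulo one classical black box. The slicing step is right: with a countable base $(V_k\times W_k)_{k\ge 0}$ of $X\times X$, the identity $Tran(x)=\bigcap_k \bigcup_{n\in N(x,V_k)} f^{-n}(W_k)$ is exactly the definition of a dense forward orbit for $f\times f$, and each $U_k$ is open, giving the $G_\delta$ part. The density argument is the standard and efficient ``syndetic meets thick'' scheme: minimality gives $X=\bigcup_{i=0}^{L}f^{-i}(V_k)$ by compactness, so $N(x,V_k)$ is syndetic (note this also disposes of the worry that $N(x,V_k)$ could be empty); your thickness argument for $N(O,W)$ via transitivity of $f^{(r+1)}$ applied to $O^{r+1}$ and $W\times f^{-1}(W)\times\cdots\times f^{-r}(W)$ is correct, and you rightly flag that surjectivity (automatic for minimal maps on compact spaces) is what makes the target set non-empty. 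The only substantive dependency is Furstenberg's theorem that weak mixing implies transitivity of all finite Cartesian powers; its usual proof via the intersection lemma for the sets $N(U,V)$ is purely open-set combinatorics and is valid for non-invertible maps, so invoking it here is legitimate. One cosmetic remark: the paper's notation ``transitive with respect to $f^{2}$'' means transitive for $f\times f$ (as its definition of weak mixing indicates), which is the reading you adopted, so no mismatch arises.
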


\begin{thm}\label{NWM}
  Let $f:X\to X$, be a minimal map where $X$ is finitely suslinean continua, then $f$ is not weakly mixing.
\end{thm}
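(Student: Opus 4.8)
The plan is to argue by contradiction: assuming $f$ is weakly mixing, I would show that $f$ is positively continuum-wise expansive and then invoke Theorem~\ref{PCW} to conclude that the unique minimal set, which is all of $X$, is $0$-dimensional; since a finitely suslinean continuum is a nondegenerate Peano continuum (hence at least $1$-dimensional), this is absurd. The whole difficulty is therefore concentrated in producing a positive continuum-wise expansivity constant out of weak mixing \emph{together with} the finitely suslinean hypothesis. Note that weak mixing alone cannot suffice, since weakly mixing minimal systems do exist on higher-dimensional continua (e.g. on the torus), so the one-dimensional structure of $X$ must be used essentially.

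First I would extract the expansivity scale from sensitivity. Since $f$ is weakly mixing and $X$ is nondegenerate, $(X,f)$ is sensitive; let $\delta_0$ be a sensitivity constant. Using that $X$ is locally connected, every point has arbitrarily small connected neighbourhoods, so for any subcontinuum $A$ with nonempty interior I can pick $x$ in its interior, a small connected neighbourhood $U\subset A$, and, by sensitivity, a point $y\in U$ with $d(f^n x,f^n y)>\delta_0$ for some $n\ge 0$; hence $\operatorname{diam}(f^n A)>\delta_0$. Thus any subcontinuum that is \emph{bad}, meaning $\sup_{n\ge 0}\operatorname{diam}(f^n A)\le\delta_0$, must have empty interior. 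Suppose now, toward the inner contradiction, that $f$ is not positively cw-expansive at the scale $\delta_0$, so that such a nondegenerate bad continuum $A$ exists.

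Next I would upgrade $A$ to a recurrent family. Passing to a minimal subset $M$ of $(C(X),C(f))$ inside the orbit closure $\overline{\{f^n(A):n\ge 0\}}$, continuity of the diameter on $C(X)$ shows that every element of $M$ is again bad, and that all of them have diameter in a fixed interval $[\eta,\delta_0]$ with $\eta>0$; the degenerate alternative $\eta=0$ forces $M$ to consist of singletons, i.e. $\operatorname{diam}(f^nA)\to 0$ and a nondegenerate positively asymptotic continuum, which I would treat by the same transitive-pair argument below. The set $\bigcup_{C\in M}C$ is compact and $f$-invariant, hence equals $X$ by minimality, and one checks $M$ is infinite. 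So $X$ is covered by nondegenerate bad continua of diameter at least $\eta$, each of empty interior.

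The main obstacle is the final step: manufacturing from this covering either a forward image of some $C\in M$ of diameter exceeding $\delta_0$ (contradicting badness), or a sequence of \emph{pairwise disjoint} subcontinua of diameter bounded below by a fixed positive number (contradicting the null-family condition that defines finitely suslinean continua). The route I would pursue uses Theorem~\ref{TP}: fixing $p$ in some $C\in M$, the set of $q$ with $(p,q)$ a transitive point of $f\times f$ is dense $G_\delta$, and if such a $q$ can be located inside the \emph{same} bad continuum $C$, then $\{(f^n p,f^n q)\}$ is dense in $X\times X$, forcing $\operatorname{diam}(f^n C)$ close to $\operatorname{diam}(X)>\delta_0$, a direct contradiction. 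The delicate point — and exactly where the hereditarily locally connected (finitely suslinean) structure must enter, since bad continua are nowhere dense whereas transitive partners are only generic — is to secure this intersection, equivalently to rule out the ``parallel strands'' configuration (a convergent sequence of disjoint continua of fixed diameter) that weak mixing tends to create near a recurrent bad continuum but that a one-dimensional hereditarily locally connected continuum cannot contain. I expect this separation/intersection step, played against the null-family property, to be the crux; the surrounding deductions (sensitivity, the hyperspace reduction, and the appeal to Theorem~\ref{PCW}) are comparatively routine.
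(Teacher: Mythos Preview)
Your overall architecture is the paper's: assume weak mixing, pass to a $C(f)$-minimal family $M\subset C(X)$ of ``bad'' continua covering $X$, then produce a transitive pair of $f\times f$ inside a single $I\in M$ to blow up $\operatorname{diam}(f^mI)$. (The sensitivity detour and the ``empty interior'' observation play no role downstream and can be dropped; the paper simply invokes Theorem~\ref{PCW} to get $\inf_{A}\sup_n\operatorname{diam}(f^nA)=0$ and then picks a bad $A$ at scale below $\delta(X)/2$.) Two genuine ingredients are missing, however.

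The first is your $\eta=0$ case. It does not reduce to $\operatorname{diam}(f^nA)\to 0$ (only $\liminf=0$ follows), and even when it does, ``the same transitive-pair argument'' cannot apply: every pair in such an $A$ is asymptotic, hence never transitive for $f\times f$. The paper eliminates this obstruction \emph{before} forming $M$: it collapses all continua with $\operatorname{diam}(f^nC)\to 0$ via a closed monotone $f$-invariant equivalence relation (using finitely suslinean to show the forward images of each class are pairwise disjoint, hence null, hence the class is closed), and verifies the quotient is again finitely suslinean, minimal, weakly mixing, nondegenerate, with $\inf_n\operatorname{diam}(\tilde f^{\,n}\tilde C)>0$ for every nondegenerate $\tilde C$; only after this reduction is $M\cap S(X)=\emptyset$ automatic. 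The second gap is the step you correctly flag as the crux, for which you supply no mechanism. The paper's device is not the null-family property but \emph{rationality}: a finitely suslinean continuum has a countable base with countable boundaries, and the union $D=\{a_n\}$ of these boundaries meets every nondegenerate subcontinuum. Hence $M=\bigcup_n(M\cap O_n)$ with $O_n=\{I\in C(X):a_n\in I\}$ closed, so by Baire some $M\cap O_n$ contains a nonempty relatively open $O\subset M$; $C(f)$-minimality then gives $M=\bigcup_{i\le k}C(f)^i(O)$ for a finite $k$. Now pick $b$ with $(a_n,b)$ transitive for $f\times f$ and any $I\in M$ containing $b$: then $I\in C(f)^i(O)$ for some $i\le k$, so $f^i(a_n)\in I$, and $\{f^i(a_n),b\}\subset I$ is the desired transitive pair inside a single element of $M$.
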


\begin{proof}
  We assume that $f$ is weakly mixing and we will get a contradiction. \\

  First let $\mathcal{R}$ be the relation defined as: $$x\mathcal{R}y \Leftrightarrow \exists C\in C(X),\; \{x,y\}\subset C\; \emph{and}\: \displaystyle\lim_{n\to +\infty}\delta(f^{n}(C))=0.$$

  \textbf{Claim 1} $\mathcal{R}$ is a closed invariant monotone equivalence relation:\\
  It is clear that $\mathcal{R}$ is an invariant equivalence relation, we will first show that for any $x\in X,\; \pi(x)\in C(X)$. Observe that the family $(f^n(\pi(x)))_{n\geq 0}$ is pairwise disjoint, otherwise for some $n<m$ we have that $f^{n}(\pi(x))\cap f^{m}(\pi(x))\neq \emptyset$. Thus we can find $x_1,x_2 \in \pi(x)$ such that $f^{n}(x_1)=f^{m}(x_2)$. Thus for any $k\geq 0,\; f^{n}(f^{k}(x_1))=f^{m}(f^k(x_2))$. Let $(m_k)_{k\geq 0}$ an increasing sequence of integer such that $(f^{m_k}(x_1))_{k\geq 0}$ converges to some $c\in X$. Since $\pi(x)\times \pi(x) \subset Asm(X,f)$, we have that $(x_1,x_2)\in Asm(X,f)$, therefore $(f^{m_k}(x_2))_{k\geq 0}$ converges also to $c\in X$. For any $k\geq 0$, we have $f^{n}(f^{m_k}(x_1))=f^{m}(f^{m_k}(x_2))$, thus $f^{n}(c)=f^{m}(c)$ with $n\neq m$, therefore $P(f)\neq \emptyset$ a contradiction. We conclude that the family $(f^n(\pi(x)))_{n\geq 0}$ is pairwise disjoint. Observe that $\pi(x)$ is arc-wise connected, in fact for any $y\in \pi(x)$, there exist $C\subset \pi(x)$ with $\{x,y\} \subset C \subset \pi(x)$, with $C\in C(X)$ (Recall that any subcontinuum of $X$ is arc-wise connected). Therefore $(f^n(\pi(x)))_{n\geq 0}$ is a family of pairwise disjoint arc-wise connected subset of $X$. If $\displaystyle\lim_{n\to +\infty}\delta(f^{n}(\pi(x))\neq 0$, then we can find a sequence of pairwise disjoint arc $(I_n)_{n\geq 0}$ such that $I_n \subset f^{n}(\pi(x))$ and $\displaystyle\lim_{n\to +\infty}\delta(I_n)\neq 0$, a contradiction since $X$ is finitely suslinean. We conclude that $\displaystyle\lim_{n\to +\infty}\delta(f^{n}(\pi(x))=0$, therefore $\displaystyle\lim_{n\to +\infty}\delta(f^{n}(\overline{\pi(x)})=0$, moreover $\overline{\pi(x)}\in C(X)$ since $\pi(x)$ is connected. We conclude that $\overline{\pi(x)}\subset \pi(x)$, thus $\pi(x)\in C(X)$.\\

  Now we will prove that $\mathcal{R}$ is  closed. Let $(x_n,y_n)_{n\geq 0}$ a sequence of $X^2$ with $x_n\mathcal{R}y_n$, for any $n\geq 0$ we will prove that $x\mathcal{R}y$.\\
  Let $z_n\in X$ such that $\{x_n,y_n\}\subset \pi(z_n)$. If for infinitely many $n\geq 0$ $\pi(z_n)=\pi(z)$ for some $z\in X$ then $\{x,y\}\subset \pi(z)$ thus $x\mathcal{R}y$. So we can assume that for some increasing sequence of integer $(m_n)_{n\geq 0},\; (\pi(z_{m_n}))_{n\geq 0}$ are pairwise disjoint thus $\displaystyle\lim_{n\to +\infty}\delta(\pi((z_{m_n})))=0$, therefore $\displaystyle\lim_{n\to +\infty}d(x_{m_n},y_{m_n})=0$ and so $x=y$, in particular $x\mathcal{R}y$.\\
  Therefore $\mathcal{R}$ is a closed invariant monotone equivalence relation and the claim is proven. Let $(\tilde{X},\tilde{f})$ be the factor system, since $\pi$ is monotone $\tilde{X}$ is also finitely suslinean and $\tilde{f}$ is minimal and weakly mixing, moreover $\tilde{X}$ is not reduced to point since $x\notin \pi(f(x))$ (otherwise $f$ will have a fixed point)

  \textbf{Claim 2} For any $\tilde{C}\in C(\tilde{X})$, such that $\tilde{C}$ is not reduced to a point, $\displaystyle\lim_{n\to +\infty}\delta(\tilde{f}^{n}(\tilde{C}))\neq 0$.\\
  Assume that there exist $\tilde{C}\in C(\tilde{X})$ not reduced to a point, such that $\displaystyle\lim_{n\to +\infty}\delta(\tilde{f}^{n}(\tilde{C}))= 0$ and let $C=\pi^{-1}(\tilde{C})$. Since $\pi$ is monotone $C\in C(X)$, moreover $\displaystyle\lim_{n\to +\infty}\delta(f^{n}(C))\neq 0$, otherwise $C\subset \pi(x)$, for some $x\in C$ and $\pi(C)=\tilde{C}$ is reduced to a point. Therefore the family $(f^{n}(C))_{n\geq 0}$ is not pairwise disjoint and we can find $n<m$ such that $f^{n}(C)\cap f^{m}(C)\neq \emptyset$ and so $\pi(f^{n}(C))\cap \pi(f^{m}(C))\neq \emptyset$. Thus $\tilde{f}^{n}(\tilde{C})\cap \tilde{f}^{m}(\tilde{C})\neq \emptyset$. Thus we can find $\tilde{x_1},\tilde{x_2} \in \tilde{C}$ such that $\tilde{f}^{n}(\tilde{x_1})=\tilde{f}^{m}(\tilde{x_2})$. Thus for any $k\geq 0,\; \tilde{f}^{n+k}(\tilde{x_1})=\tilde{f}^{m+k}(\tilde{x_2})$. Let $(m_k)_{k\geq 0}$ an increasing sequence of integer such that $(\tilde{f}^{m_k}(\tilde{x_1}))_{k\geq 0}$ converges to some $\tilde{c}\in \tilde{X}$. Since $\tilde{C}\times \tilde{C} \subset Asm(\tilde{X},\tilde{f})$, we have that $(\tilde{x_1},\tilde{x_2})\in Asm(\tilde{X},\tilde{f})$, therefore $(f^{m_k}(x_2))_{k\geq 0}$ converges also to $c\in X$. For any $k\geq 0$, we have $\tilde{f}^{n}(\tilde{f}^{m_k}(\tilde{x_1}))=\tilde{f}^{m}(\tilde{f}^{m_k}(\tilde{x_2}))$, thus $\tilde{f}^{n}(\tilde{c})=\tilde{f}^{m}(\tilde{c})$ with $n\neq m$, therefore $P(\tilde{f})\neq \emptyset$ a contradiction and the claim is proven.\\

  Since $\tilde{X}$ is a finitely suslinean continua not reduced to a point and $\tilde{f}$ is minimal and weakly mixing, we can for the rest of the proof and will assume that $(X,f)$ satisfies claim 2, in other word for any $C\in C(X)\setminus S(X),\; \inf_{n\geq 0}\delta (f^n(C))>0$.\\

  Let $A\in C(X)\setminus S(X)$, where $S(X)=\{\{x\},\; x\in X\}$. Denote by $$\epsilon_{A}=\sup_{n\geq 0}\delta (f^n(C)),\; \emph{and} \; \epsilon=\inf_{A\in C(X)\setminus S(X)}\epsilon_{A}.$$
 \textbf{Claim 3} $\epsilon=0$. Otherwise if $\epsilon>0$, then $f$ will be positivity continuum wise expensive with constant $\frac{\epsilon}{2}$. In fact for any $A\in C(X)\setminus S(X), \epsilon_{A}>\frac{\epsilon}{2}$, thus for some $n\geq 0,\; \delta(f^{n}(A))>\frac{\epsilon}{2}$. This end the proof of Claim 3 since $X$ is minimal and connected (see Theorem \ref{PCW}).\\
 Now since $\epsilon=0$ we can find a sequence $(A_n)_{n\geq 0}$ such that $\epsilon_{A_n}<\frac{1}{n}$. For each $n\geq 0$, let $M_n \subset \omega_{C(f)}(A_n)$ be a minimal set for $C(f)$. For any $C\in M_n$ we have that $\delta(C)\leq \frac{1}{n}$, moreover since $\inf_{n\geq 0}\delta (f^n(C))>0$, then for any $D\in M_n$ we have that $0<\delta(D)\leq \frac{1}{n}$. In other word $M_n \cap S(X)=\emptyset$.\\
 Let $k\geq 0$ such that $\frac{1}{k}\leq \frac{\delta(X)}{2}$ and $M=M_k$. Denote by $\mathcal{B}$ a countable base of opens set with countable boundary (Recall that $X$ is in particular a rational continuum) $D=\{a_n,\; n\geq 0\}$ be the union of the boundary of the elements of $\mathcal{B}$. Clearly for any $A\in C(X)\setminus S(X)$, there exists some $n\geq 0$ such that $a_n \in A$. Thus $C(X)\setminus S(X)\subset \displaystyle\bigcup_{n\geq 0}O_n$, where $O_n=\{A\in C(X),\; a_n\in A\}$. Clearly each $O_n$ is a closed subset of $C(X)$, moreover $M=\displaystyle\bigcup_{n\geq 0}(O_n\cap M)$, since $M$ is compact then for some $n\geq 0,\; M\cap O_n$ contains an open set $O\subset O_n$ of $M$. Let $a=a_n$, since $C(f)$ is minimal on $M$, then for some $k\geq 0,\: M= \displaystyle\bigcup_{0\leq i\leq k}C(f)^{i}(O)$. Since $f$ is weakly mixing and minimal we can find $b\in X$ such that $(a,b)$ is a transitive pair of $f^{2}$. In one hand $X= \displaystyle\bigcup_{I\in M}I$, then we can find some $I\in M$ such that $b\in I$, in the other hand $I\in C(f)^{i}(O)$, for some $0\leq i\leq k$, therefore $\{f^{i}(a),b\}\subset I$. Now since $(a,b)$ is a transitive pair of $f^{2}$, then so is $(f^{i}(a),b)$. Let $(x_1,x_2)\in X^2$ such that $d(x_1,x_2)\geq \frac{\delta(X)}{2}$, we can find a sequence of point of the orbit of $(f^{i}(a),b)$ under $f^2$ converging to $(x_1,x_2)$, thus for some $m\geq 0,\; d(f^{m+i}(a),f^{m}(b))\geq \frac{\delta(X)}{2}$, thus $\delta(f^{m}(I))\geq \frac{\delta(X)}{2}$, a contradiction.

\end{proof}

\begin{lem}\label{azz}\cite{az} \rm{
Let $X$ be a continuum, the following assertions are equivalents:\\
 (i) $X$ is hereditarily locally connected;\\
 (ii) For any sequence $(A_n)_{n\geq 0}$ in $C(X)$ converging to $A\in C(X)$, we have $\displaystyle\lim_{n\rightarrow +\infty} Mesh(A_{n}\setminus A)=0$, (where $Mesh(B)=\displaystyle\sup \{diam(C): \ C$ is a connected component of $B\})$.\\
 (iii) For any sequence $(A_n)_{n\geq 0}$ of subcontinua of $X$ converging to a non degenerated subcontinuum $A$ of $X$, we have $\displaystyle\lim_{n\rightarrow +\infty}A_{n}\cap A=A$.}
\end{lem}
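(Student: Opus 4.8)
The plan is to establish the cycle $(i)\Rightarrow(ii)\Rightarrow(iii)\Rightarrow(i)$. The single external tool I would rely on is the classical characterisation of hereditary local connectedness: a continuum is hereditarily locally connected if and only if it contains no \emph{convergence continuum}, that is, no non-degenerate subcontinuum $K$ arising as a Hausdorff limit $K=\lim_n K_n$ of subcontinua with $K_n\cap K=\emptyset$ for all $n$ (Whyburn, Kuratowski). Granting this, the implication $(iii)\Rightarrow(i)$ is immediate by contraposition: if $X$ is not hereditarily locally connected it contains such a $K=\lim_n K_n$ with $K$ non-degenerate and $K_n\cap K=\emptyset$; taking $A_n=K_n$ and $A=K$ we have $A_n\cap A=\emptyset$ for every $n$, so $A_n\cap A$ does not converge to the non-degenerate set $A$, and $(iii)$ fails.

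For $(ii)\Rightarrow(iii)$, let $A_n\to A$ in $C(X)$ with $A$ non-degenerate; note this direction does not use $(i)$. Since $A_n\cap A\subset A$ always, it suffices to bound $\sup_{a\in A}d(a,A_n\cap A)$ and to check $A_n\cap A\neq\emptyset$. The latter holds for large $n$, for otherwise $A_n\setminus A=A_n$ is connected and $Mesh(A_n\setminus A)=diam(A_n)\to diam(A)>0$, contradicting $(ii)$. For the former, fix $a\in A$ and use $d_H(A_n,A)\to0$ to pick $b_n\in A_n$ with $d(a,b_n)\le d_H(A_n,A)$. If $b_n\notin A$ then $b_n$ lies in a component $C_n$ of $A_n\setminus A$, which is open in the continuum $A_n$; by the boundary bumping theorem $\overline{C_n}$ meets $A$, so there is $a_n\in\overline{C_n}\cap A\subset A_n\cap A$ with $d(a_n,b_n)\le diam(C_n)\le Mesh(A_n\setminus A)$. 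Hence $d(a,A_n\cap A)\le d_H(A_n,A)+Mesh(A_n\setminus A)$, which tends to $0$ uniformly in $a$; thus $A_n\cap A\to A$.

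The crux is $(i)\Rightarrow(ii)$, which I would prove by contradiction. Assume $X$ is hereditarily locally connected but $(ii)$ fails for some $A_n\to A$: after passing to a subsequence there are components $C_n$ of $A_n\setminus A$ with $diam(C_n)\ge\varepsilon$ for a fixed $\varepsilon>0$. Because $X$ is hereditarily locally connected, each subcontinuum $A_n$ is a Peano continuum, so the component $C_n$ of the open set $A_n\setminus A$ is itself open in $A_n$, hence arcwise connected. Choosing $u_n,v_n\in C_n$ with $d(u_n,v_n)\ge\varepsilon/2$ (possible since $diam(C_n)\ge\varepsilon$) I join them by an arc $K_n\subset C_n$; then $K_n\cap A=\emptyset$ and $diam(K_n)\ge\varepsilon/2$. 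By compactness of $C(X)$ a subsequence satisfies $K_n\to K$ with $diam(K)\ge\varepsilon/2$, so $K$ is non-degenerate; moreover $K\subset A$ because $K_n\subset A_n$ and $\limsup_n A_n\subset A$. Since $K_n\cap A=\emptyset$ while $K\subset A$, we get $K_n\cap K=\emptyset$, so $K$ is a convergence continuum, contradicting that $X$ is hereditarily locally connected.

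The main obstacle is precisely this last construction: extracting from a failure of the mesh bound a genuine convergence continuum. The delicate point is to keep the approximating continua $K_n$ entirely off $A$, which forces the argument to take place inside the \emph{open} component $C_n$ and to use arcwise connectedness of connected open subsets of a Peano continuum; one cannot argue with the closures $\overline{C_n}$, whose frontiers lie in $A$ and would spoil the disjointness $K_n\cap K=\emptyset$. It is here that local connectedness (hypothesis $(i)$) is used in an essential way. Everything else reduces to the boundary bumping theorem, the continuity of $diam$ under Hausdorff convergence, and the classical equivalence between hereditary local connectedness and the absence of convergence continua, which I treat as known.
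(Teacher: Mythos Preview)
The paper does not prove this lemma at all: it is quoted verbatim from \cite{az} and used as a black box in the proof of Lemma~\ref{pc}. So there is no ``paper's approach'' to compare against here.

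That said, your argument is correct and is the natural way to establish the equivalences, via the classical characterisation ``$X$ is hereditarily locally connected $\Leftrightarrow$ $X$ contains no continuum of convergence''. The cycle $(i)\Rightarrow(ii)\Rightarrow(iii)\Rightarrow(i)$ goes through exactly as you describe. One cosmetic point: in the step $(ii)\Rightarrow(iii)$ you write that the component $C_n$ of $A_n\setminus A$ ``is open in the continuum $A_n$''; this is not guaranteed without local connectedness of $A_n$, and you explicitly say you are not using $(i)$ in that implication. Fortunately the boundary bumping theorem you invoke does not need $C_n$ to be open---it applies to any component of a nonempty proper open subset of a continuum---so the slip is harmless. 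In $(i)\Rightarrow(ii)$, by contrast, you \emph{do} use that $C_n$ is open in $A_n$ (to get arcwise connectedness and produce the arc $K_n\subset C_n$), and there the hypothesis that $A_n$ is Peano, coming from $(i)$, legitimately supplies it. The rest---continuity of $\mathrm{diam}$ under $d_H$, $K\subset A$ from $K_n\subset A_n\to A$, and $K_n\cap K=\emptyset$ from $K_n\cap A=\emptyset$---is routine and correctly handled.
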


\begin{lem}\label{pc}
 Let $\pi: X\to S_1$ such that $X$ is a hereditarily locally connected continuum and $\pi$ is an onto monotone map satisfaying for any $x\in S_1, \pi^{-1}(x)$ has empty interior, then $f$ is an homeomorphism, in particular $X=S_1$.
\end{lem}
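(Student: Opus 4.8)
The plan is to show that $\pi$ is injective. Once that is done, $\pi$ is a continuous bijection from the compact space $X$ onto the Hausdorff space $S_1$, hence a homeomorphism, so $X=S_1$; and since $\pi$ semi-conjugates $f$ to the irrational rotation $\tilde f$ of $S_1$, the identity $f=\pi^{-1}\circ\tilde f\circ\pi$ exhibits $f$ as a homeomorphism. Because $\pi$ is monotone, injectivity is equivalent to every fibre being degenerate, so I would argue by contradiction and assume some fibre $F=\pi^{-1}(x_0)$ is a non-degenerate subcontinuum; by hypothesis $F$ has empty interior.

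Next I would localise around $x_0$. Pick a nested sequence of closed arcs $[a_n,b_n]\subset S_1$ with $x_0\in(a_n,b_n)$ and $\bigcap_n[a_n,b_n]=\{x_0\}$, and set $A_n=\pi^{-1}([a_n,b_n])$. Monotonicity makes each $A_n$ a subcontinuum; the sequence decreases to $\bigcap_n A_n=F$, so $A_n\to F$ in the Hausdorff metric. Put $L_n=\pi^{-1}([a_n,x_0))$ and $R_n=\pi^{-1}((x_0,b_n])$. Writing each as an increasing union of preimages of closed subarcs shows, again by monotonicity, that $L_n$ and $R_n$ are connected; and since $\pi$ sends them into the two components $[a_n,x_0)$ and $(x_0,b_n]$ of $[a_n,b_n]\setminus\{x_0\}$, they are precisely the two connected components of $A_n\setminus F$. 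Here is where hereditary local connectedness enters: as $F$ is non-degenerate, Lemma \ref{azz} (ii) gives $Mesh(A_n\setminus F)\to 0$, hence $\delta(L_n)\to 0$ and $\delta(R_n)\to 0$.

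Finally I would exploit this shrinking. The continua $\overline{R_n}$ decrease, each meets $F$, and $\delta(\overline{R_n})\to 0$, so $\bigcap_n\overline{R_n}=\{p^+\}$ is a single point of $F$; likewise $\bigcap_n\overline{L_n}=\{p^-\}\subset F$. Now take any $q\in F$: since $F$ has empty interior, $q=\lim_k z_k$ with $z_k\in X\setminus F$, and $\pi(z_k)\to x_0$. After passing to a subsequence the points $\pi(z_k)$ lie on one fixed side of $x_0$; if they lie to the right then for every fixed $n$ one has $\pi(z_k)\in(x_0,b_n]$ for $k$ large, so $z_k\in R_n$ eventually and $q\in\overline{R_n}$, whence $q\in\bigcap_n\overline{R_n}=\{p^+\}$ (and $q=p^-$ in the left case). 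Thus $F\subset\{p^+,p^-\}$, which is impossible for a non-degenerate continuum, and the contradiction finishes the proof. The one delicate point is the diameter estimate $\delta(L_n),\delta(R_n)\to 0$: without Lemma \ref{azz} the one-sided preimages could in principle accumulate on all of $F$, and the final contradiction would collapse — so hereditary local connectedness is doing the real work.
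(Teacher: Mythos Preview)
Your argument is correct and follows the same overall strategy as the paper: assume a fibre $F=\pi^{-1}(x_0)$ is non-degenerate, surround it by shrinking connected neighbourhoods, invoke Lemma~\ref{azz}(ii) to force the mesh of the complement of $F$ in these neighbourhoods to zero, and derive a contradiction from the fact that $F$ has empty interior. The difference lies in the execution of the last two steps. The paper first shows that only countably many fibres are non-degenerate, uses this to build open neighbourhoods $U_n\supset F$ with $\mathrm{card}(\partial U_n)=2$, argues (via connectedness of $X\setminus F$) that $\overline{U_n}\setminus F$ has at most two components, and then reaches a contradiction by a pigeonhole: three points approximating three distinct points of $F$ from outside must land two-in-one component, violating the mesh bound. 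You bypass the countability step and the boundary-cardinality construction entirely by taking $L_n=\pi^{-1}([a_n,x_0))$ and $R_n=\pi^{-1}((x_0,b_n])$, whose connectedness comes for free from monotonicity; then, instead of a pigeonhole with three points, you show directly that every $q\in F$ is a limit from one side and hence lies in $\bigcap_n\overline{L_n}=\{p^-\}$ or $\bigcap_n\overline{R_n}=\{p^+\}$, forcing the connected non-degenerate $F$ into a two-point set. Your route is somewhat more economical; the paper's route makes the finite-boundary structure of $X$ more visible. (The sentence about $f$ and the irrational rotation is extraneous here --- the ``$f$'' in the lemma's conclusion is a typo for $\pi$, and the lemma itself involves no dynamics.)
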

 \begin{proof}
   \textbf{Claim} For each $x\in X,\; X\setminus \pi^{-1}(x)$ is connected and we can find a sequence of open sets $(U_n)_{n\geq 0}$ with the following properties:\\
   \begin{enumerate}
    \item $\pi^{-1}(x) \subset U_{n+1} \subset U_n$;
    \item $\text{card}(\partial(U_n)) = 2$;
    \item $\pi^{-1}(x) = \bigcap_{n \in \mathbb{N}} \overline{U_n}$;
    \item $U_n$ is a connected open subset of $X$.
\end{enumerate}

\noindent
\textbf{Proof:} The connectedness of $X \setminus \pi^{1}(x)$ can be easily deduced from the fact that
$\pi$ is monotone from $X$ onto $S_1$ and $S_1\setminus \pi(x)$ is connected. Let $N = \{s \in S_1 \mid \text{diam}(\pi^{-1}(s)) > 0\}$, since $X$ is hereditarily locally connected and $\pi$ is monotone, $N$ is at most countable. It follows that, we may choose a sequence of open arcs $(V_n)_{n \geq 0}$ in $S_1$ with end points $z_n$ and $t_n$ lying outside $\{\tilde{x}\} \cup N$ such that $V_{n+1} \subset V_n \quad \text{and} \quad \{\pi(x)\} = \bigcap_{n \in \mathbb{N}} V_n.$\\
Let, for each $n$, $U_n = \pi^{-1}(V_n)$. Thus, we may verify easily that $(U_n)_{n \geq 0}$ is a sequence of open sets of $X$ with the desired properties. This end the proof of the claim.\\

Let $x\in S_1$, such that $\pi^{-1}(x)$ is not reduced to a point. Recall that $\pi^{-1}(x)$ is a connected subset with empty interior then we can find a sequences $(a^1_n,a^2_n,a^3_n)_{n\geq 0} \in X^3$ such that $(a^1_n,a^2_n,a^3_n)\to (a_1,a_2,a_3)\in \pi^{-1}(x)$ with $\epsilon= \min_{1\leq i<j\leq 3} d(a_i,a_j)>0$. We have $\pi^{-1}(x) = \bigcap_{n \in \mathbb{N}} \overline{U_n}$. By Lemma \ref{azz}, we can find $N\geq 0$ such that $Mesh(U_N\setminus \pi^{-1}{x})<\frac{\epsilon}{2}$, denote by $U=U_N\subsetneq X$.\\
Let $C$ be a connected component of $\overline{U}\setminus \pi^{-1}(x)$ and $y\in C$. Since $X\setminus \pi^{-1}(x)$ is connected we can find an arc $I$ joining $y$ and some point $c\notin \overline{U}$, therefore $C\cap \overline{\partial U} \neq \emptyset$. We have that $\partial U$ contains two point, we conclude that $\overline{U}\setminus \pi^{-1}(x)$ has at most two connected component. Let $n$ large enough so that $\{a^1_n,a^2_n,a^3_n\}\subset U$ and $\min_{1\leq i<j\leq 3} d(a^{n}_i,a^{n}_j)>\frac{\epsilon}{2}$. Since $\overline{U}\setminus \pi^{-1}(x)$ has at most two connected components, let $C$ be the connected component of $\overline{U}\setminus \pi^{-1}(x)$ that contains $a^{n}_i \neq a^{n}_j$ with $1\leq i<j\leq 2$, therefore $diam(C)\geq \frac{\epsilon}{2}$ a contradiction. We conclude that $\pi^{-1}(x)$ is reduced to a point and thus $\pi$ is one to one, therefore an homeomorphism and $X=S_1$.

 \end{proof}

\begin{thm}
  Let $f:X\to X$ be a minimal map on a finitely suslinean continua $X$, then $X=S_1$ and $f$ is an irrational rotation of the circle.
\end{thm}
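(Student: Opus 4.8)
The plan is to combine the structural results already established with Lemma~\ref{pc}. First I would record that since $X$ is finitely suslinean, it is in particular a locally connected, hereditarily locally connected and suslinean continuum, as noted in the preliminaries. Hence $(X,f)$ is a minimal map on a locally connected suslinean continuum, and by Theorem~\ref{NWM} the map $f$ cannot be weakly mixing. Invoking the dichotomy for minimal maps on locally connected suslinean continua (the theorem preceding Section~4), the non--weakly--mixing alternative must hold, so $(X,f)$ is semi--conjugated to an irrational rotation $(S_1,R)$. Moreover, by Theorem~\ref{factor monotone} this semi--conjugacy is realised by a monotone factor map $\pi\colon X\to S_1$ satisfying $\pi\circ f=R\circ\pi$.

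The remaining task is to verify the hypotheses of Lemma~\ref{pc}, the only nontrivial point being that every fiber $\pi^{-1}(x)$ has empty interior. I would argue by contradiction: suppose $\pi^{-1}(x_0)$ contains a nonempty open set $W$. Iterating the intertwining relation gives $\pi\circ f^{n}=R^{n}\circ\pi$ for all $n$. By minimality the forward orbit of any point $q\in X$ is dense, hence meets $W$, say $f^{n}(q)\in W\subset\pi^{-1}(x_0)$; applying $\pi$ yields $R^{n}(\pi(q))=x_0$, so that $\pi(q)\in\{R^{-n}(x_0):n\geq 0\}$. Since $q$ was arbitrary, $\pi(X)$ would be contained in this countable set, contradicting the surjectivity of $\pi$ onto the uncountable circle $S_1$. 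Thus every fiber has empty interior.

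Finally, since $X$ is hereditarily locally connected, $\pi\colon X\to S_1$ is onto and monotone, and all fibers have empty interior, Lemma~\ref{pc} applies and yields that $\pi$ is a homeomorphism with $X=S_1$. Transporting the dynamics through $\pi$, the relation $\pi\circ f=R\circ\pi$ then shows $f=\pi^{-1}\circ R\circ\pi$ is conjugate to, and therefore is, an irrational rotation of the circle. I expect the empty--interior step to be the main obstacle: everything else is a matter of assembling the earlier theorems and checking that $X$ meets the topological hypotheses of Lemma~\ref{pc}, whereas the emptiness of fiber interiors is the one fact not handed to us directly, even though the density--plus--countability argument above disposes of it cleanly.
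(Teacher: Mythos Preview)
Your proposal is correct and follows the same overall architecture as the paper: rule out weak mixing via Theorem~\ref{NWM}, obtain a monotone factor map $\pi\colon X\to S_1$ onto the maximal equicontinuous factor, show that every fiber has empty interior, and conclude with Lemma~\ref{pc}. The one place where your argument genuinely diverges is the empty--interior step. The paper argues dynamically: if $\pi^{-1}(\pi(x))$ had interior, some $f^{k}(x)$ would lie in it, giving $(x,f^{k}(x))\in RP^{+}(X,f)$; since the iterates $f^{n}(RP^{+}(x))$ form a pairwise disjoint family of subcontinua of the finitely suslinean space $X$, they are a null family, forcing $(x,f^{k}(x))$ to be asymptotic and hence producing a periodic point. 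Your counting argument---every orbit meets $W$, so $\pi(X)$ lands in the countable backward orbit $\{R^{-n}(x_0):n\ge 0\}$, contradicting surjectivity onto the uncountable circle---is shorter, does not use the finitely suslinean hypothesis at this step, and works for any minimal extension of an irrational rotation. The paper's argument, by contrast, exploits the specific geometry of $X$ and extracts a stronger intermediate fact (the fibers have diameters tending to zero along orbits), but for the stated goal your route is the more economical one. One minor point of presentation: you invoke the dichotomy theorem and Theorem~\ref{factor monotone} separately, so you should make explicit that the semi--conjugacy to $S_1$ produced by the dichotomy \emph{is} the maximal equicontinuous factor map, which is what guarantees monotonicity.
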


\begin{proof}
Let $f:X\to X$ be a minimal map on a finitely suslinean continua $X$. By Theorem \ref{NWM}, $f$ is not weakly mixing then by Theorem \ref{RPP}, $f$ has a none-trival maximal equicontinoues factor via a monotone map $\pi$. Since $\pi(X)$ is a finitely suslinean continuum that admit a minimal equicontinous homeomorphism, then it is homogenous and so $\pi(X)=S_1$. It remains to show that for any $x\in X,\; \pi^{-1}(x)$ has an empty interior, this follows immediately since otherwise $\pi^{-1}(x)$ will contains $f^{k}(x)$ for some $k>0$, and so $(x,f^{k}(x))\in RP^+ (X,f)$. Since $f^{n}(RP^{+}(x))_{n\geq 0}$ is a null family we have that $(x,f^{k}(x))$ is an asymptotic pair and therefore $x$ is a periodic point, which is a contradiction. Thus $\pi^{-1}(x)$ has an empty interior and so by lemma \ref{pc}, $X=S_1$ and $f$ is an irrational rotation of the circle.
\end{proof}
\section*{Acknowledgements}
\begin{itemize}
  \item This work was supported by the research unit: “Dynamical systems and their applications”, (UR17ES21),
Ministry of Higher Education and Scientific Research, Faculty of Science of Bizerte, Bizerte, Tunisia.
  \item The author is thankful to Mr.Issam Naghmouchi for his helpful remarks and discussion during the preparation of the paper.
\end{itemize}

\bibliographystyle{amsplain}

\begin{thebibliography}{99}

%
%






\bibitem{LC1} Blokh, A., Oversteegen, L., Tymchatyn, E. D. (2005). On minimal maps of 2-manifolds. Ergodic Theory and Dynamical Systems, 25(1), 41–57.

\bibitem{M5} Ellis, R. (1964). Global sections of transformation groups. Illinois Journal of Mathematics, 8, 380–394.

\bibitem{M6} Ellis, R. (1965). The construction of minimal discrete flows. American Journal of Mathematics, 87, 564–574.

\bibitem{eqq} Auslander, J., Dai, X. (2019). Minimality, distality and equicontinuity for semigroup actions on compact Hausdorff spaces. Discrete and Continuous Dynamical Systems, 39. https://doi:10.3934/dcds.2019190.

\bibitem{RP} Dai, X. (2018). The McMahon pseudo-metrics of minimal semiflows with invariant measures. arXiv preprint, arXiv:1806.09305.

\bibitem{hsc} Daniel, D., Nikiel, J., Treybig, L. B., Tuncali, H. M., Tymchatyn, E. D. (2011). Homogeneous Suslinian Continua. Canadian Mathematical Bulletin, 54(2), 244-248.

\bibitem{az} Daghar, A. (2023). Homeomorphism of Hereditarily Locally Connected Continua. Journal of Dynamics and Differential Equations, 35(4), 3269-3293.

\bibitem{PCWE} Kato, H. (1993). Continuum-wise expansive homeomorphisms. Canadian Journal of Mathematics, 45(3), 576-598.

\bibitem{M4} Parry, W. (1974). A note on cocycles in ergodic theory. Contemporary Mathematics, 28, 343–350.

\bibitem{Kura} Kuratowski, K. (1968). Topology Vol. II. Academic Press, New York.

\bibitem{conti} Nadler, S. B. (1992). Continuum Theory: An Introduction. Monographs and Textbooks in Pure and Applied Mathematics Vol. 158, Marcel Dekker, New York.

\bibitem{MEQ} Hauser, T., Jäger, T. (2019). Monotonicity of maximal equicontinuous factors and an application to toral flows. Proceedings of the American Mathematical Society, 147(10), 4539-4554.

\bibitem{M1} Auslander, J. (1988). Minimal Flows and their Extensions. North-Holland Mathematics Studies, 153, Elsevier-Science, Amsterdam.

\bibitem{M2} Bronštein, I. U. (1979). Extensions of Minimal Transformation Groups. Sijthoff and Noordhoff, The Hague.

\bibitem{M3} Kolyada, S., Snoha, L. U., Trofimchuk, S. (2001). Noninvertible minimal maps. Fundamenta Mathematicae, 168(2), 141-163.

\bibitem{TP} Mlíchová, M. (2018). Li–Yorke sensitive and weak mixing dynamical systems. Journal of Difference Equations and Applications, 24(5), 667-674.

\bibitem{G1} Mai, J. H., Shao, S. (2007). The structure of graph maps without periodic points. Topology and its Applications, 154(14), 2714-2728.















\end{thebibliography}
\bigskip

\end{document}